\documentclass[reqno]{amsart}
\usepackage{amssymb,amsmath,latexsym,amsthm,graphicx}
\usepackage{verbatim}

\newtheorem{thm}{Theorem}[section]
\newtheorem{prop}[thm]{Proposition}

\theoremstyle{definition}

\theoremstyle{remark}


\begin{document}

\title
{The Super Catalan Numbers $S(m, m + s)$ for $s \leq 4$}

\author{Xin Chen}
\address{Carleton College, Northfield, MN 55057 USA}
\email{chenx@carleton.edu}

\author{Jane Wang}
\address{Princeton University, Princeton, NJ 08544 USA}
\email{jywang@princeton.edu}

 \begin{abstract}
 We give a combinatorial interpretation using lattice paths for the super Catalan number $S(m, m+s)$ for $s \leq 3$ and a separate interpretation for $s = 4$.
\end{abstract}

\maketitle

\section{Introduction}
The Catalan numbers
\begin{equation}
\label{eq:catalan}
C_n =  \frac{1}{n+1} \binom{2n}{n} = \frac{(2n)!}{n! (n+1)!},
\end{equation}
are known to be integers and have many combinatorial interpretations.

In 1874, E. Catalan \cite{C} observed that the numbers
\begin{equation}
\label{eq:supercat}
S(m, n) = \frac{(2m)! (2n)!}{m! n! (m+n)!},
\end{equation} are also integers. Gessel \cite{G} later referred to these numbers as the super Catalan numbers since $S(1,n)/2$ gives the Catalan number $C_n$. Gessel and Xin \cite{GX} presented combinatorial interpretations for $S(n, 2)$ and $S(n,3)$, but in general it remains an intriguing open problem to find a combinatorial interpretation of the super Catalan numbers. In this paper, we give a combinatorial interpretation for $S(m, m+s)$ for $s \leq 4$.

\section{Another Super Catalan Identity}

To present the combinatorial interpretations, we first derive an identity for the super Catalan numbers from the well-known Von Szily identity (\cite[Section 6]{G})
\begin{equation}
\label{eq:vonszily}
S(m, n) = \sum_{k} (-1)^k \binom{2n}{n-k} \binom{2m}{m+k}.
\end{equation}
We provide both a combinatorial and an algebraic proof for this identity.

\begin{prop}
\label{prop:newid}
For $m, s \geq 0$, the following identity for the super Catalan numbers holds:
\begin{equation}
\label{eq:newid}
S(m, m + s) = \sum_{k} (-1)^k \binom{2m}{m-k} \binom{2s}{s + 2k}.
\end{equation}
\end{prop}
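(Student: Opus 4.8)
The plan is to prove \eqref{eq:newid} algebraically, starting from the assumed Von Szily identity \eqref{eq:vonszily} and exploiting the special structure that appears when $n = m+s$. The first step is to recast the alternating convolution in \eqref{eq:vonszily} as a single coefficient in a product of binomial expansions. Expanding $(1+z)^{2m}(1-z)^{2n}$ and collecting the terms whose exponents are $i = m+k$ from the first factor and $j = n-k$ from the second — these always sum to $m+n$ — gives
\[
[z^{m+n}]\,(1+z)^{2m}(1-z)^{2n} = \sum_k (-1)^{n-k}\binom{2m}{m+k}\binom{2n}{n-k} = (-1)^n S(m,n),
\]
since $(-1)^{-k} = (-1)^k$. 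Thus Von Szily is equivalent to the compact statement $(-1)^n S(m,n) = [z^{m+n}](1+z)^{2m}(1-z)^{2n}$.

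Next I would specialize to $n = m+s$. The crucial algebraic step is the factorization
\[
(1+z)^{2m}(1-z)^{2m+2s} = \bigl[(1+z)(1-z)\bigr]^{2m}(1-z)^{2s} = (1-z^2)^{2m}(1-z)^{2s},
\]
which decouples the $m$-dependence — now carried entirely by even powers of $z$ through $(1-z^2)^{2m}$ — from the $s$-dependence in $(1-z)^{2s}$. Extracting $[z^{2m+s}]$ from this product, with the contribution $z^{2i}$ from $(1-z^2)^{2m}$ and $z^{2m+s-2i}$ from $(1-z)^{2s}$, produces a single alternating sum indexed by $i$.

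Finally, reindexing by $i = m-k$ turns $\binom{2m}{i}$ into $\binom{2m}{m-k}$ and the complementary factor into $\binom{2s}{s+2k}$, which is exactly the shape of the right-hand side of \eqref{eq:newid}. The one point demanding care is the sign bookkeeping: the global prefactor $(-1)^{m+s}$ from the restatement, the sign $(-1)^{2m+s-2i}$ contributed by $(1-z)^{2s}$, and the sign $(-1)^i = (-1)^{m-k}$ from $(1-z^2)^{2m}$ must combine to leave precisely $(-1)^k$ with no residual global sign; a short check confirms this. I expect this sign reconciliation to be the only (minor) obstacle, since all the real content sits in the factorization step. A purely combinatorial derivation is also available — interpreting both sides as counts of signed lattice paths and building a sign-reversing involution to cancel the negative contributions — but the generating-function route above is the most direct way to pass from the given Von Szily identity to \eqref{eq:newid}.
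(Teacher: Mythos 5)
Your proposal is correct and follows essentially the same route as the paper's algebraic proof: both extract the coefficient of $z^{m+n}$ from $(1+z)^{2m}(1-z)^{2n}$ (the paper uses the equivalent product with $z\mapsto -z$), use the factorization into $(1-z^2)^{2m}$ times a power of $(1\pm z)^{2s}$ to decouple the $m$- and $s$-dependence, identify one side as $\pm S(m,n)$ via Von Szily, and reindex by $k\mapsto m-k$; your sign bookkeeping indeed checks out. The only difference is cosmetic (which factor carries the minus sign), so the two arguments are the same proof.
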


\noindent{\it Combinatorial Proof. }
We first interpret \eqref{eq:vonszily} in terms of lattice paths. We assume without loss of generality that $m \leq n$ and note that for any $k$, $\binom{2n}{n-k} \binom{2m}{m+k}$ counts the number of lattice paths from $(0,0)$  to $(m+n, m+n)$ going though the point $(m+k, m-k)$ with unit right and up steps.

We now define a sign-reversing involution $\phi$ on the lattice paths $$\bigcup_k  \left( (0,0) \rightarrow (m+k, m-k) \rightarrow (m+n, m+n) \right)$$ with the sign determined by the parity of $k$ such that the number of fixed paths under this involution is the super Catalan number. We let $P = (s_1, s_2, \ldots, s_{2m+2n})$, an ordered set, be a path from the point $(0,0)$ to $(m+n, m+n)$ where each $s_i = R$ or $U$ depending on whether it is a right step or an up step. We then find (if it exists) the least $i$, $1 \leq i \leq 2m$, such that $s_i \neq s_{2m+i}$ and switch these two steps. In other words, we have the map $\phi(P)= P^\prime = (s_1^\prime, s_2^\prime, \ldots, s_{2m+2n}^\prime)$ such that $$s_j^\prime = \begin{cases} s_{2m+i}, & j = i \\ s_{i}, & j = 2m + i \\ s_j, & \text{otherwise.} \end{cases}$$

For such a path, since $k$ is the number of right steps in the first $2m$ steps minus $m$, $P$ and $P^\prime$ will have $k$'s of opposite parity so $\phi$ is a sign-reversing involution. We notice that all the lattice paths with $s_k \ne s_{2m+k}$ for some $1 \le k \le 2m$ are cancelled under the involution $\phi.$ Therefore, the paths fixed under the involution are those from the point $(0,0)$ to $(m-k, m+k)$ to $(m+n, m+n)$ such that $s_i = s_{2m+i}$ for all $1\leq i \leq 2m$. For any $k$, we have $\binom{2m}{m-k} \binom{2n-2m}{n-m + 2k}$ such paths since by symmetry there are $\binom{2m}{m-k}$ ways to choose the first $4m$ steps and $\binom{2n-2m}{n-m+2k}$ ways to choose the last $2n-2m$ steps. Accounting for the parity of the $k$'s, we then have that $$S(m, n) = \sum_k (-1)^k \binom{2m}{m-k} \binom{2n-2m}{n-m+2k}.$$ Letting $s = n-m$ then gives us (4) as desired.
\qed \\
 
\noindent{\it Algebraic Proof. }
Equating the coefficients of $x^{m+n}$ in $$(1+x)^{2n} (1-x)^{2m} = (1-x^2)^{2m} (1+x)^{2n-2m},$$ we have that $$ \sum_{j} (-1)^{m + j} \binom{2n}{n-j} \binom{2m}{m + j} = \sum_{k} (-1)^{k} \binom{2m}{k} \binom{2n - 2m}{m + n - 2k}.$$ Applying Von Szily's identity \eqref{eq:vonszily} on the left hand side and replacing $k$ by $m-k$ on the right hand side, we get that $$(-1)^m S(n,m) = \sum_k (-1)^{m - k} \binom{2m}{m - k} \binom{2n-2m}{n-m+2k}.$$ The result follows by letting $s = n - m$.
\qed

\section{A Combinatorial Interpretation of $S(m, m+s)$ for $s \leq 3$}
\label{main}
\subsection{The Main Theorem}
In this subsection, we use the identity in Proposition \ref{prop:newid} to provide a combinatorial interpretation for $S(m, m+s)$ for $s \leq 3$. To state our result, we first need to define four line segments, see Figure~\ref{fig:4lines}:
 
$\ell_1$ connects $(0,1)$ and $(m - 1/2, m + 1/2)$,

 $\ell_2$ connects $(1,0)$ and $(m+ 1/2, m - 1/2)$,

 $\ell_3$ connects $(m-1, m+1)$ and $(m+s-1, m+s+1)$, and

$\ell_4$ connects $(m+1, m-1)$ and $(m+s+1, m+s-1)$.

\begin{figure}

\begin{center}
\includegraphics{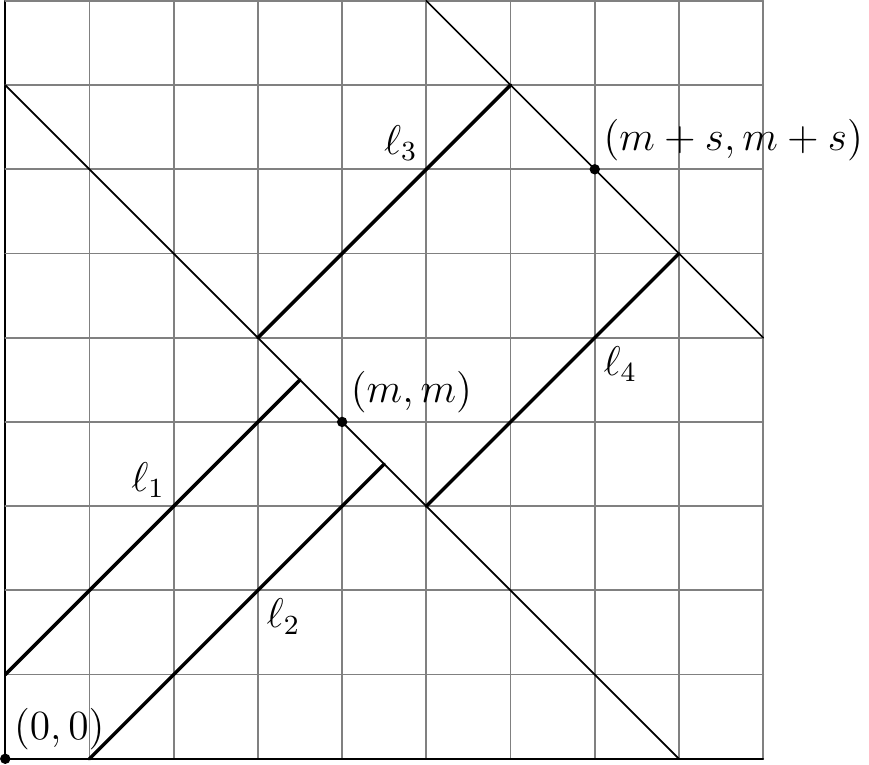}

\begin{center}
\caption{$\ell_1$ through $\ell_4$ for the $m=4, s = 3$ case.\label{fig:4lines}}
\end{center}

\end{center}
\end{figure}
\begin{thm}
\label{thm:main}
For $s \leq 3$, $S(m, m+s)$ counts the number of paths from $(0,0)$ to $(m+s, m+s)$ passing through $(m, m)$ that do not intersect both lines $\ell_1$ and $\ell_4$ or both $\ell_2$ and $\ell_3$.
\end{thm}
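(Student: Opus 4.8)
The plan is to pass to the anti-diagonal coordinate $d = y - x$, under which a unit right step sends $d \mapsto d-1$ and a unit up step sends $d \mapsto d+1$. A lattice path from $(0,0)$ to $(m+s,m+s)$ through $(m,m)$ then becomes a walk in $d$ of length $2m+2s$ from $0$ to $0$, and passing through $(m,m)$ is equivalent to $d=0$ at time $2m$; so the walk splits into an independent first block (steps $1$ through $2m$, from $0$ to $0$) and second block (steps $2m+1$ through $2m+2s$, from $0$ to $0$). The first task is to verify the dictionary between the four segments and level-crossings of this walk. Since a slope-$1$ segment meets an axis-parallel path only at lattice points, and $\ell_1,\ell_2,\ell_3,\ell_4$ lie on the lines $d=1,-1,2,-2$ with ranges covering exactly the reachable lattice points in the relevant block, one checks that "intersecting $\ell_1$'' (resp.\ $\ell_2$) means the first block attains $d=1$ (resp.\ $d=-1$), while "intersecting $\ell_3$'' (resp.\ $\ell_4$) means the second block attains $d=2$ (resp.\ $d=-2$). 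The total number of walks is $\binom{2m}{m}\binom{2s}{s}$, which is the $k=0$ term in Proposition~\ref{prop:newid}.

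Write $X$ for the event that both $\ell_1$ and $\ell_4$ are met and $Y$ for the event that both $\ell_2$ and $\ell_3$ are met, so the good paths form the complement of $X\cup Y$ and, by inclusion--exclusion, their number is $\binom{2m}{m}\binom{2s}{s} - |X| - |Y| + |X\cap Y|$. Reflecting the whole configuration across the main diagonal, i.e.\ $d \mapsto -d$, swaps $\ell_1 \leftrightarrow \ell_2$ and $\ell_3 \leftrightarrow \ell_4$, hence interchanges $X$ and $Y$ and gives $|X| = |Y|$.

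I would then evaluate $|X|$. Because $\ell_1$ lives entirely in the first block and $\ell_4$ in the second, and the walk decomposes independently at its return to $0$ at time $2m$, the event factors: $|X|$ equals the number of length-$2m$ walks $0\to 0$ reaching level $1$ times the number of length-$2s$ walks $0\to 0$ reaching level $-2$. The reflection principle (reflecting past the first visit to the barrier) identifies these with walks $0\to 2$ and walks $0\to -4$ respectively, giving $\binom{2m}{m-1}$ and $\binom{2s}{s+2}$, so $|X| = |Y| = \binom{2m}{m-1}\binom{2s}{s+2}$. The crux is the term $|X\cap Y|$, which forces the second block, of length $2s$, to visit both $+2$ and $-2$: a walk from $0$ back to $0$ doing so needs at least $2+4+2 = 8$ steps, and for $s \le 3$ we have $2s \le 6 < 8$, so $X\cap Y = \emptyset$. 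This is precisely where the hypothesis $s\le 3$ enters, and its failure at $s=4$ (where $2s = 8$ already suffices) is exactly what forces the separate treatment of that case.

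Finally I would reconcile with Proposition~\ref{prop:newid}: for $s\le 3$ the factor $\binom{2s}{s+2k}$ vanishes unless $|k|\le 1$, so the identity collapses, via $\binom{2m}{m+1}=\binom{2m}{m-1}$ and $\binom{2s}{s-2}=\binom{2s}{s+2}$ for the $k=-1$ term, to $S(m,m+s) = \binom{2m}{m}\binom{2s}{s} - 2\binom{2m}{m-1}\binom{2s}{s+2}$. Combining this with the count above, the number of good paths is $\binom{2m}{m}\binom{2s}{s} - 2\binom{2m}{m-1}\binom{2s}{s+2} = S(m,m+s)$, as claimed (one should also confirm the degenerate case $m=0$, where both sides reduce to $\binom{2s}{s}$). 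I expect the genuine obstacle to be the careful verification of the geometric-to-combinatorial dictionary of the first paragraph, ensuring the four segments capture precisely the level events with no boundary lattice point missed or spuriously included, together with the clean argument that $X\cap Y$ is empty; the reflection-principle enumerations themselves are routine.
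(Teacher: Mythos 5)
Your proposal is correct and is essentially the paper's own argument in different packaging: both reduce Proposition \ref{prop:newid} to the three terms $\binom{2m}{m}\binom{2s}{s} - \binom{2m}{m-1}\binom{2s}{s+2} - \binom{2m}{m+1}\binom{2s}{s-2}$, identify the two negative terms with the counts of paths hitting $\ell_1,\ell_4$ (resp.\ $\ell_2,\ell_3$) via reflection, and use $s\le 3$ to rule out a path meeting both $\ell_3$ and $\ell_4$. Your reflection-principle-plus-inclusion-exclusion formulation is the same underlying bijection that the paper states as an explicit injection from $\mathrm{Path}_{-1}\cup\mathrm{Path}_1$ into $\mathrm{Path}_0$, with your $X\cap Y=\emptyset$ playing the role of the paper's disjointness-of-cancelled-sets observation.
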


\begin{proof}
From \eqref{eq:newid}, we have that for $s \leq 3$, $$S(m, m+s) = \binom{2m}{m}\binom{2s}{s} - \binom{2m}{m-1} \binom{2s}{s + 2} - \binom{2m}{m+1} \binom{2s}{s-2}.$$
We notice that $\binom{2m}{m} \binom{2s}{s}$ counts the number of paths from the point $(0,0)$ to $(m, m)$ to $(m+s, m+s)$ and denote this set of paths by $\rm{Path}_0$. Similarly, $\binom{2m}{m-1}\binom{2s}{s+2}$ counts the number of paths from $(0,0)$ to $(m-1, m+1)$ to $(m+s+1, m+s-1)$ and $\binom{2m}{m+1}\binom{2s}{s-2}$ counts the number of paths from $(0,0)$ to $(m+1, m-1)$ to the point $(m+s-1, m+s+1)$. We denote these sets of paths by $\rm{Path}_{-1}$ and $\rm{Path}_{1}$, respectively.

We define an injection from $\rm{Path}_{-1} \cup \rm{Path}_{1}$ to $\rm{Path}_0$. To do so, first notice that any path $P \in \rm{Path}_{-1}$ must intersect $\ell_1$. We can find the last such intersection and reflect the tail of $P$'s $(0,0)$ to $(m-1, m+1)$  segment over $\ell_1$. This will give us a segment from $(0,0)$ to $(m, m)$. We then translate the last $2s$ steps of $P$ so that they start at $(m, m)$ and end at $(m+s+2, m+s-2)$. This segment must then intersect $\ell_4$. We can find the last such intersection and reflect the tail of this segment over $\ell_4$. Combining the two new segments, we now have a path in $\rm{Path}_0$, see Figure~\ref{fig:ex1} for an example. Notice that this map has a well-defined inverse. We define a similar map for paths in $\rm{Path}_{1}$, but reflect over lines $\ell_2$ and $\ell_3$ instead of $\ell_1$ and $\ell_4$.

We notice that paths in $\rm{Path}_{-1}$ cancel exactly with the paths in $\rm{Path}_0$ that intersect both lines $\ell_1$ and $\ell_4$. Similarly, we find that paths in $\rm{Path}_{1}$ cancel out exactly with the paths in $\rm{Path}_0$ that intersect both lines $\ell_2$ and $\ell_3$. Moreover, any path in $\rm{Path}_0$ does not intersect both $\ell_3$ and $\ell_4$ since $s \le 3$. This guarantees that the paths in $\rm{Path}_0$ that $\rm{Path}_{-1}$ and $\rm{Path}_{1}$ cancel do not overlap. Therefore, $S(m, m+s)$ counts the number of paths in $\rm{Path}_0$ that do not intersect both lines $\ell_1$ and $\ell_4$ or both $\ell_2$ and $\ell_3$.

\end{proof}

\begin{figure}
\begin{center}

\includegraphics{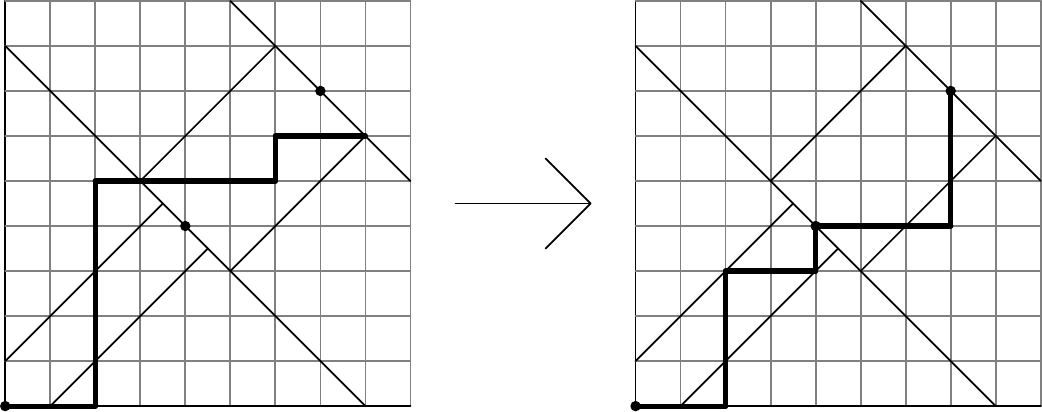}
\begin{center}

\caption{A mapping from Path$_{-1}$ to Path$_0$ for $m=4, s = 3$. \label{fig:ex1}}

\end{center}
\end{center}
\end{figure}

\subsection{Enumeration of the Paths in Theorem \ref{thm:main}}
From \eqref{eq:supercat}, we can find explicit expressions for the super Catalan numbers when $s$ is small. In this subsection, we count the paths remaining under the injection defined in Theorem \ref{thm:main} and show how they match with these explicit expressions.

For $s = 0$, we have that $$S(m, m) = \frac{(2m)! (2m!)}{m!m!(m+m)!} = \binom{2m}{m},$$ where the central binomial coefficient counts the paths from $(0,0)$ to $(m,m).$ These are exactly the paths specified in Theorem \ref{thm:main} for $s = 0$.
 
For $s=1$, we have that $$S(m, m+1) = \frac{(2m)!(2m + 2)!}{m!(m+1)! (2m+1)!} = 2 \binom{2m}{m},$$ which counts the number of paths from $(0,0)$ to $(m+1, m+1)$ going through $(m, m)$. Since no paths from $(m, m)$ to $(m+1, m+1)$ intersect $\ell_3$ or $\ell_4$, these are also exactly the paths specified in Theorem \ref{thm:main}.

For $s=2$,
 $$S(m, m + 2) = \frac{(2m)!(2m+4)!}{m!(m+2)! (2m + 2)!} = 2 (2m + 3) C_m.$$
By Theorem \ref{thm:main}, the paths remaining in $\rm{Path}_0$ under the injection either intersect only one of $\ell_1$ and $\ell_2$ or intersect both $\ell_1$ and $\ell_2$. We first count those that only intersect $\ell_1$. There are $C_m$ ways to choose the first $2m$ steps since this segment of the path must stay below the line $y=x$. Then, there are $5$ possible paths from $(m, m)$ to $(m + 2, m+2)$ that do not intersect $\ell_4$. Hence, we have totally $5C_m$ paths from $(0,0)$ to $(m,m)$ to $(m+2,m+2)$ that intersect only $\ell_1$. By symmetry, we have $5C_m$ paths that only intersect $\ell_2$.

We now count the paths that intersect both $\ell_1$ and $\ell_2$. Consider the segment from $(0,0)$ to $(m, m)$. We notice that to get the number of paths that intersect both $\ell_1$ and $\ell_2$, we subtract the number of paths that intersect only $\ell_1$ or $\ell_2$ from the total number of paths from $(0,0)$ to $(m, m)$. We have $$\binom{2m}{m} - 2C_m = \binom{2m}{m} - \frac{2}{m+1} \binom{2m}{m} = (m-1)C_m$$ such paths. Since there are $4$ paths from $(m, m)$ to $(m+2, m+2)$ that do not intersect $\ell_3$ or $\ell_4$, we have $4(m-1)C_m$ paths that intersect both $\ell_1$ and $\ell_2$. Therefore, the total number of paths is $$2 \cdot 5C_m + 4(m-1)C_m = 2(2m+3)C_m,$$ as desired.

Finally, for $s=3$, we have that $$S(m, m + 3) = \frac{(2m)!(2m+6)!}{m!(m+3)! (2m + 3)!} = 4 (2m + 5) C_m.$$ Applying a similar method as in the $s=2$ case, we can count $2 \cdot 14 C_m$ paths that only cross $\ell_1$ or $\ell_2$ and $8(m-1)C_m$ paths that cross both $\ell_1$ and $\ell_2$. It is clear that $$2 \cdot 14C_m + 8(m-1)C_m = 4(2m+5)C_m.$$

\section{A Combinatorial Interpretation of $S(m, m+4)$}
In general, the methods used for finding a combinatorial interpretation for the super Catalan numbers $S(m, m + s)$ for $s \leq 3$ in Section \ref{main} do not generalize nicely to higher $s$. In this section, we examine the case of $s = 4$.

We first define the following lines in addition to $\ell_1, \ell_2, \ell_3, \ell_4$ as defined in Section 3. These lines are pictured in Figure~\ref{fig:7lines}.

$\ell_5$ connects $(m - 1/2, m + 1/2)$ and $(m +s - 1/2, m +s + 1/2)$,

 $\ell_6$ connects $(m, m)$ and $(m+s, m+s)$, and

 $\ell_7$ connects $(m+1/2, m-1/2)$ and $(m+s+1/2, m+s-1/2)$.

\begin{figure}
\begin{center}
 \includegraphics{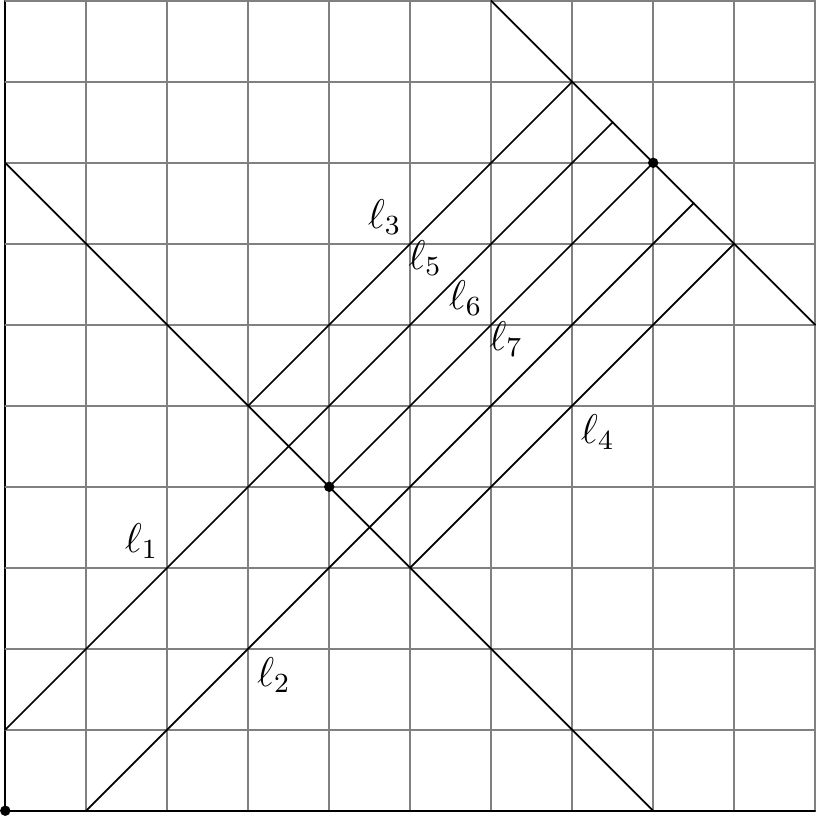}
\begin{center}
\caption{$\ell_1$ through $\ell_7$ for $m=4, s=4$. \label{fig:7lines}}
\end{center}
\end{center}

\end{figure} 

\begin{thm}
\label{thm:s4}
For $s = 4$, $S(m, m+s)$ counts the number of paths from $(0,0)$ to $(m,m)$ to $(m+s,m+s)$ that do not touch both lines $\ell_1$ and $\ell_4$ or $\ell_2$ and $\ell_3$ such that if they have an intersection of $\ell_1$ before an intersection of $\ell_2$, they do not also remain between lines $\ell_5$ and $\ell_6$ or between $\ell_6$ and $\ell_7$.
\end{thm}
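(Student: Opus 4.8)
The plan is to expand $S(m,m+4)$ by Proposition~\ref{prop:newid} and run the cancellation scheme of Theorem~\ref{thm:main}, now with two extra positive terms. Since $\binom{8}{4+2k}=0$ once $|k|\ge 3$, the identity becomes
\[
S(m,m+4)=|\mathrm{Path}_0|-|\mathrm{Path}_{-1}|-|\mathrm{Path}_1|+|\mathrm{Path}_{-2}|+|\mathrm{Path}_2|,
\]
where $\mathrm{Path}_i$ is the set of paths from $(0,0)$ to $(m+i,m-i)$ to $(m+s-i,m+s+i)$ with sign $(-1)^i$; note that $\mathrm{Path}_{-2}$ ends its second segment with a straight run of $8$ right steps and $\mathrm{Path}_2$ with $8$ up steps. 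I would first reuse the two double-reflection bijections of Theorem~\ref{thm:main} unchanged: reflecting across $\ell_1$ then $\ell_4$ identifies $\mathrm{Path}_{-1}$ with the set $A_1\subseteq\mathrm{Path}_0$ of paths meeting both $\ell_1$ and $\ell_4$, and reflecting across $\ell_2$ then $\ell_3$ identifies $\mathrm{Path}_1$ with the set $A_2$ of paths meeting both $\ell_2$ and $\ell_3$. The first three terms therefore contribute $|\mathrm{Path}_0|-|A_1|-|A_2|$.

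The new feature for $s=4$ is that the inclusion--exclusion does not stop here. For $s\le 3$ the second segment is too short to reach height difference $+2$ and $-2$, so no path can meet both $\ell_3$ and $\ell_4$ and hence $A_1\cap A_2=\varnothing$; this is precisely the step of Theorem~\ref{thm:main} that breaks down. For $s=4$ the second segment has length $8$, and a direct check shows that exactly two second segments meet both $\ell_3$ and $\ell_4$, so $A_1\cap A_2\neq\varnothing$ and the term $-|A_1|-|A_2|$ over-subtracts it once. The positive terms are there to repair this. I would build injections $\mathrm{Path}_{-2}\hookrightarrow A_1\cap A_2$ and $\mathrm{Path}_2\hookrightarrow A_1\cap A_2$ with disjoint images, obtained by iterating the reflections of Theorem~\ref{thm:main}: folding the first segment back across $\ell_1$ and $\ell_2$ turns it into a path crossing the diagonal in both directions, while folding the straight second run across $\ell_3$ and $\ell_4$ produces one of the two extremal second segments. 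Adding $|\mathrm{Path}_{-2}|+|\mathrm{Path}_2|$ then cancels all of $A_1\cap A_2$ except the paths lying outside these two images.

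It remains to match these residual paths with the good paths that the theorem discards, and this is where $\ell_5,\ell_6,\ell_7$ enter. I would check that a good path (one avoiding both $\ell_1,\ell_4$ and $\ell_2,\ell_3$) whose first segment crosses the diagonal in both directions must keep its second segment strictly between $\ell_3$ and $\ell_4$, and that under the reflections the uncancelled elements of $A_1\cap A_2$ correspond exactly to those good paths which meet $\ell_1$ before $\ell_2$ and whose second segment never leaves one of the two unit strips bounded by $\ell_5,\ell_6,\ell_7$. The cleanest packaging is to assemble everything into a single sign-reversing involution on $\mathrm{Path}_0\sqcup\mathrm{Path}_{-1}\sqcup\mathrm{Path}_1\sqcup\mathrm{Path}_{-2}\sqcup\mathrm{Path}_2$ whose fixed points are exactly the paths in the statement. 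The main obstacle is the well-definedness of this involution near the diagonal: a path in $\mathrm{Path}_0$ meeting all four of $\ell_1,\ell_2,\ell_3,\ell_4$ is eligible to pair both with $\mathrm{Path}_{-1}$ and with $\mathrm{Path}_1$, and the conditional in the statement---keyed on whether $\ell_1$ or $\ell_2$ is met first and on the $\ell_5/\ell_6/\ell_7$ strips---is exactly the rule that breaks this tie. The bulk of the work is verifying that this rule yields an honest involution, with matching signs and no longer cycles, so that the surviving fixed points are neither more nor fewer than the asserted good paths.
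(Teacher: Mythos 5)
Your proposal follows essentially the same route as the paper's own proof: the five-term expansion from Proposition~\ref{prop:newid}, reuse of the Theorem~\ref{thm:main} reflections for $\mathrm{Path}_{\pm 1}$ with the resulting double cancellation on paths hitting all four of $\ell_1,\ell_2,\ell_3,\ell_4$, repair of that over-subtraction by injecting $\mathrm{Path}_{\pm 2}$ into those paths via iterated reflections (the two forced extremal second segments playing exactly the role you describe), and a final matching of the uncancelled residue with the paths confined to the $\ell_5/\ell_6/\ell_7$ strips. The only cosmetic differences are that the paper folds the $\mathrm{Path}_{\pm 2}$ first segments across auxiliary lines $\ell_8,\ell_9$ (through $(0,3)$ and $(3,0)$) followed by $\ell_1,\ell_2$ rather than across $\ell_1,\ell_2$ directly, and it organizes the argument as sequential inclusion--exclusion rather than one global sign-reversing involution; both versions produce the same image characterization and the same final count.
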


\begin{proof}

For $s = 4$, we notice that (4) gives us five terms corresponding to $k, -2 \leq k \leq 2$. Interpreting these terms with lattice paths, we again have that for any $i, -2 \leq i \leq 2$, the $k = i$ term counts the number of paths in $\rm{Path}_i$ where $\rm{Path}_i$ is defined as the set of lattice paths from $(0,0)$ to $(m+i, m-i)$ to $(m+s-i, m+s+i)$.
We again map paths in $\rm{Path}_{1}$ and $\rm{Path}_{-1}$ to $\rm{Path}_{0}$ using the mapping defined in the proof of Theorem 3.1. Under this mapping however, we have double cancellation of paths that intersect all four lines $\ell_1, \ell_2, \ell_3,$ and $\ell_4$. Therefore,
\begin{align*}
|\rm{Path}_0| - |\rm{Path}_1| - |\rm{Path}_{-1}| =&  |\{ \text{Paths that do not intersect both $\ell_1, \ell_4$ or $\ell_2, \ell_3$}\}| \\
& - |\{\text{Paths that intersect $\ell_1, \ell_2, \ell_3,$ and $\ell_4$}\}|.
\end{align*}

We may also map the terms in $\rm{Path}_{2}$ and $\rm{Path}_{-2}$ into the set $\rm{Path}_0$. We first define $\ell_8$ to be the line segment from $(0,3)$ to $(m - 3/2, m + 3/2)$ and $\ell_9$ to be the line segment from $(3,0)$ to $(m+3/2, m-3/2)$. Then all paths in $\rm{Path}_{-2}$ must intersect $\ell_8$. We find the last such intersection and reflect the tail end of these first $2m$ steps over line $\ell_8$. Now we have a segment that intersects $\ell_8$ and ends at $(m-1, m+1)$. This segment must also intersect $\ell_1$ before its last intersection of $\ell_8$. We find the last such intersection of $\ell_1$ and reflect the tail end of the segment over $\ell_1$. This then gives us a segment from $(0,0)$ to $(m,m)$ that has an intersection of $\ell_2$ somewhere before an intersection of $\ell_1$.

We note that this is also an invertible operation. Also, we transform the last $2s$ steps of the path to the segment from $(m,m)$ to $(m + s, m+s)$ that intersects first $\ell_3$ and then $\ell_4$, see Figure~\ref{fig:ex2} for an example.

By symmetry, we can map the paths in $\rm{Path}_{2}$ to the paths in $\rm{Path}_0$ that have an intersection of $\ell_1$ before an intersection of $\ell_2$ and also hit lines $\ell_4$ and $\ell_3$ in that order. Hence, the paths in $\rm{Path}_{-2}$ and $\rm{Path}_2$ are mapped to exactly the paths in $\rm{Path}_0$ that hit all four lines $\ell_1, \ell_2, \ell_3, \ell_4$, but at some point hit $\ell_1$ before hitting $\ell_2$.
\begin{figure}
\begin{center}

\includegraphics{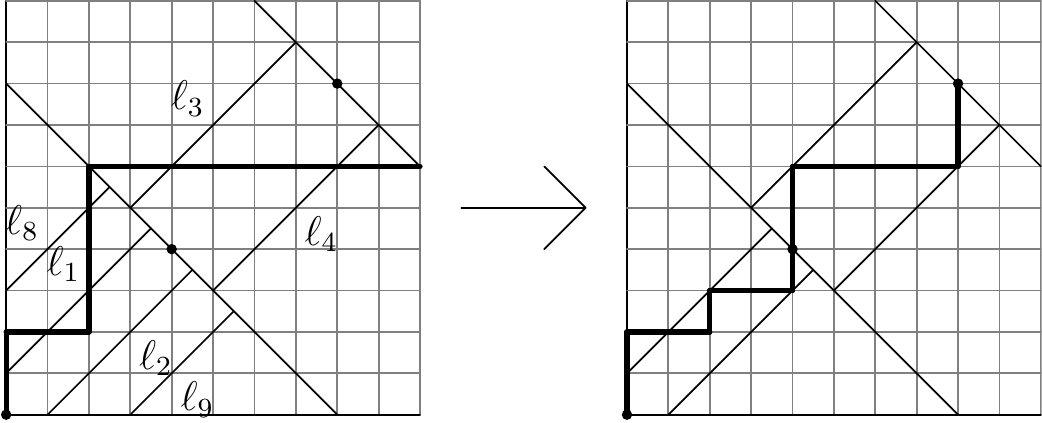}

\begin{center}
\caption{A transformation from Path$_{-2}$ to Path$_0$ for $m=4, s=4$. \label{fig:ex2}}
\end{center}
\end{center}
 \end{figure}
 
 Adding this to our count for $|\rm{Path}_0| - |\rm{Path}_1| - |\rm{Path}_{-1}|$, we have
\begin{align*}
S(m, m+4) = & |\{ \text{Paths that do not intersect both $\ell_1, \ell_4$ or $\ell_2, \ell_3$}\}| \\
& - |\{\text{Paths that intersect $\ell_1, \ell_2, \ell_3,$ and $\ell_4$, but never $\ell_1$ before $\ell_2$}\}|.
\end{align*}
 
 But if we define the sets $S_1$ and $S_2$ where $$S_1 = \{\text{Paths that intersect $\ell_1, \ell_2, \ell_3,$ and $\ell_4$, but not $\ell_1$ before $\ell_2$}\}$$ and
 \begin{align*}
 S_2 = \{ \text{Paths that intersect } & \text{$\ell_1$ and $\ell_2$, not $\ell_1$ before $\ell_2$} \\
 & \text{ that stay between $\ell_5$ and $\ell_6$ or $\ell_6$ and $\ell_7$}\},
 \end{align*}
 we can find a bijection between the two sets as follows. For paths in $S_1$, we map the last $2s$ steps of paths that intersect first $\ell_4$ and then $\ell_3$ to those that remain between $\ell_5$ and $\ell_6$, and the last $2s$ steps of paths that intersect first $\ell_3$ and then $\ell_4$ to those that remain between $\ell_6$ and $\ell_7$.
  
 Hence, we can find that $S(m, m+4)$ counts the number of paths from $(0,0)$ to $(m,m)$ to $(m+s,m+s)$ that do not touch both lines $\ell_1$ and $\ell_4$ or $\ell_2$ and $\ell_3$ such that if they have an intersection of $\ell_1$ before an intersection of $\ell_2$, they do not also remain between lines $\ell_5$ and $\ell_6$ or between $\ell_6$ and $\ell_7$.
 
 \end{proof}
 
 \section{Remarks}
The combinatorial interpretation of $S(m, m+4)$ given in Theorem 4.1 is not very satisfying because it imposes many conditions on the paths that are counted. The examination of the case of $s=4$, however, reveals some of the difficulties of using this method for $s \geq 4$, namely that we have to deal with double cancellation as well as the addition of more paths. For these reasons, it is unlikely that the methods used in the proof of Theorem 3.1 will generalize to higher $s$.

\section{Acknowledgments}
This research was conducted at the 2012 summer REU (Research Experience for Undergraduates) program at the University of Minnesota, Twin Cities, funded by NSF grants DMS-1148634 and DMS-1001933. The first author was also partially supported by the Carleton Kolenkow Reitz Fund. The authors would like to thank Profs. Dennis Stanton, Vic Reiner, Gregg Musiker, and Pavlo Pylyavskyy for mentoring the program. We would also like to express our particular gratitude to Prof. Stanton and Alex Miller for their guidance throughout the project.


\begin{thebibliography}{1}

\bibitem{C}
E. Catalan, Question 1135, Nouvelles Annales de Math\'ematiques (2) 13 (1874), 207.

\bibitem{G}
I. M. Gessel, Super ballot numbers, J. Symbolic Comput. 14 (1992) 179-194.

\bibitem{GX}
I. M. Gessel; G. Xin, A combinatorial interpretation of the numbers 6(2n)!/n!(n+2)!,
J. Integer Seq. 8 (2005) Article 05.2.3; arXiv:math/0401300.


\end{thebibliography}
\end{document}